\newenvironment{manualtheorem}[1]{%
  \manualtheoreminner
}{\endmanualtheoreminner}
\theoremstyle{theorem}
\newtheorem{theorem}{Theorem}
\newtheorem{proposition}[theorem]{Proposition}
\newtheorem{lemma}[theorem]{Lemma}
\newtheorem{corollary}[theorem]{Corollary}
\theoremstyle{definition}
\newtheorem{definition}[theorem]{Definition}
\newtheorem{remark}[theorem]{Remark}
\numberwithin{theorem}{section}
\newcommand{\CC}{\mathbb{C}}
\newcommand{\ZZ}{\mathbb{Z}}
\DeclareMathOperator{\lm}{lm}
\DeclareMathOperator{\lc}{lc}
\DeclareMathOperator{\lexp}{lexp}
\DeclareMathOperator{\plm}{plm}
\DeclareMathOperator{\plc}{plc}
\DeclareMathOperator{\lcm}{lcm}
\DeclareMathOperator{\Ann}{Ann}
\DeclareMathOperator{\gr}{gr}
\DeclareMathOperator*{\supp}{supp}
\DeclareMathOperator{\Ext}{Ext}
\DeclareMathOperator{\Chr}{Ch^{rel}}
\let\OldMarginpar\marginpar
\renewcommand{\marginpar}[1]{\OldMarginpar{\footnotesize#1}}
\tikzset{
  symbol/.style={
    draw=none,
    every to/.append style={
      edge node={node [sloped, allow upside down, auto=false]{$#1$}}}
  }
}
\begin{document}
\begin{abstract}
In this article we investigate the fibers of relative $D$-modules. In general we prove that there exists an open, Zariski dense subset of the vanishing set of the annihilator over which the fibers of a cyclic relative $D$-module are non-zero.

Next we restrict our attention to relatively holonomic $D$-modules. For this class we prove that the fiber over every point in the vanishing set of the annihilator is non-zero. 

As a consequence we obtain new proofs of a conjecture of Budur which was recently proven by Budur, van der Veer, Wu and Zhou, as well as a new proof of a theorem of Maisonobe. Moreover, we also obtain a diagonal specialization result for Bernstein-Sato ideals.
\end{abstract}
\title{On the support of relative $D$-modules}

\author{Robin van der Veer}
\address{Robin van der Veer\\KU Leuven\\Department of Mathematics\\Celestijnenlaan 200B bus 2400\\B-3001 Leuven, Belgium}
\email{robin.vanderveer@kuleuven.be}

\makeatletter
\@namedef{subjclassname@2020}{
	\textup{2020} Mathematics Subject Classification}
\makeatother

\subjclass[2020]{14F10 (primary), 16Z10 (secondary).}
\keywords{Groebner basis; Weyl algebra; Bernstein-Sato ideal; $b$-function.}

\maketitle

\tableofcontents

\section{Introduction}
Let $F=(f_1,\dots,f_p)$ be a tuple of polynomials on $X=\CC^n$. The Bernstein-Sato ideal of $F$, denoted $B_F$, is the ideal in $\CC[s_1,\dots,s_p]$ consisting of those $b\in\CC[s_1,\dots,s_p]$ for which there exists a $P\in D_n[s_1,\dots,s_p]$ such that
$$bf_1^{s_1}\dots f_p^{s_p}=Pf_1^{s_1+1}\dots f_p^{s_p+1},$$
where $D_n$ is the Weyl algebra. Associated to $F$ we also have the specialization complex $\psi_F(\CC_X)$ of \cite{sab}. This is a generalisation of the nearby cycles complex to a tuple of functions. It comes equipped with $p$ simultaneous monodromy actions, and we denote by $\mathcal{S}(F)\subset (\CC^*)^p$ the support of this monodromy action. We refer to \cite{B}, \cite{BVWZ} for more details and background on $B_F$ and $\psi_F(\CC_X)$.

The motivation of this article is to give an alternative proof of the following theorem:
\begin{manualtheorem}{A}[Conjectured in \cite{B}, proven in \cite{BVWZ}]\label{conj}
Denote by $\text{Exp}:\CC^p\to (\CC^*)^p$ the coordinate-wise exponential map. Then
$$\text{Exp}(Z(B_F))=\mathcal{S}(F).$$
\end{manualtheorem}
In the above theorem and throughout this article $Z(I)$ denotes the scheme defined by an ideal $I$.
The inclusion 
$$\text{Exp}(Z(B_F))\supset\mathcal{S}(F)$$
was proven already in \cite{B}. To analyse the reverse inclusion the following criterion can be extracted from the proof of \cite[Proposition 1.7]{B}.
\begin{proposition}[\cite{B}]\label{crit}
If $\alpha\in Z(B_F)$ and
\begin{equation}\frac{D_n[s_1,\dots,s_p]f_1^{s_1}\dots f_p^{s_p}}{D_n[s_1,\dots,s_p]f_1^{s_1+1}\dots f_p^{s_p+1}}\otimes_{\CC[s_1,\dots,s_p]}\frac{\CC[s_1,\dots,s_p]}{\mathfrak{m}_\alpha}\not=0\label{goal},\end{equation}
then $Exp(\alpha)\in\mathcal{S}(F)$.
\end{proposition}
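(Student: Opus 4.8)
\medskip

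\noindent\emph{Proof proposal.} The plan is to convert the $D$-module non-vanishing in~\eqref{goal} into a topological non-vanishing of $\psi_F(\underline{\CC}_{\CC^n})$ at $\text{Exp}(\alpha)$ by means of the relative Riemann--Hilbert correspondence. Write $\CC[\mathbf{s}]:=\CC[s_1,\dots,s_p]$, $M:=D_n[\mathbf{s}]\,f_1^{s_1}\cdots f_p^{s_p}$, $N:=M/D_n[\mathbf{s}]\,f_1^{s_1+1}\cdots f_p^{s_p+1}$, $\CC_\alpha:=\CC[\mathbf{s}]/\mathfrak{m}_\alpha$ and $\lambda:=\text{Exp}(\alpha)$, so that the left-hand side of~\eqref{goal} is $N\otimes_{\CC[\mathbf{s}]}\CC_\alpha$. (Since the class of $f_1^{s_1}\cdots f_p^{s_p}$ generates $N$ over $D_n[\mathbf{s}]$ and $B_F$ is central, one has $B_F=\Ann_{\CC[\mathbf{s}]}(N)$, so~\eqref{goal} already forces $\alpha\in Z(B_F)$; the first hypothesis only records which $\alpha$ one is treating in the larger argument towards Theorem~\ref{conj}.)

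\medskip

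\noindent\emph{The bridge.} By the theory of relative holonomic $D$-modules and the relative Riemann--Hilbert correspondence (see \cite{B}, \cite{BVWZ} and the references therein), $M$ and hence its quotient $N$ are relative holonomic over $D_n[\mathbf{s}]$, and the specialization complex $\psi_F(\underline{\CC}_{\CC^n})$ is, up to a fixed cohomological shift and the substitution $t_i=\text{Exp}(s_i)$ that turns the $\CC[\mathbf{s}]$-action into the monodromy action of $\CC[t^{\pm}]:=\CC[\ZZ^p]$, the relative de Rham complex $\mathrm{DR}_{\CC^n}(N)$ of $N$ (formed in the $\CC^n$-directions). I would use this in fibrewise form: for every $\alpha$ there is a base-change quasi-isomorphism
$$\psi_F(\underline{\CC}_{\CC^n})\otimes^{L}_{\CC[t^{\pm}]}\bigl(\CC[t^{\pm}]/\mathfrak{m}_\lambda\bigr)\ \simeq\ \mathrm{DR}_{\CC^n}\!\bigl(N\otimes^{L}_{\CC[\mathbf{s}]}\CC_\alpha\bigr)[c],\qquad c\in\ZZ\text{ fixed},$$
where $\mathfrak{m}_\lambda\subset\CC[t^{\pm}]$ is the maximal ideal at $\lambda$ and $N\otimes^{L}_{\CC[\mathbf{s}]}\CC_\alpha$ is computed by the Koszul complex of $N$ on $s_1-\alpha_1,\dots,s_p-\alpha_p$: it lies in cohomological degrees $[-p,0]$, has $H^0=N\otimes_{\CC[\mathbf{s}]}\CC_\alpha$, and — again by relative holonomicity, for \emph{every} $\alpha$ and not only the generic one — has all cohomology modules (regular) holonomic over $D_n$.

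\medskip

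\noindent\emph{Conclusion.} By~\eqref{goal} the degree-zero cohomology $N\otimes_{\CC[\mathbf{s}]}\CC_\alpha$ of $N\otimes^{L}_{\CC[\mathbf{s}]}\CC_\alpha$ is non-zero, so $N\otimes^{L}_{\CC[\mathbf{s}]}\CC_\alpha$ is a non-zero bounded complex of regular holonomic $D_n$-modules. As $\mathrm{DR}_{\CC^n}$ is an equivalence onto the bounded derived category of $\CC$-constructible sheaves on $\CC^n$ (Kashiwara's Riemann--Hilbert correspondence), in particular conservative, $\mathrm{DR}_{\CC^n}(N\otimes^{L}_{\CC[\mathbf{s}]}\CC_\alpha)\neq 0$, hence $\psi_F(\underline{\CC}_{\CC^n})\otimes^{L}_{\CC[t^{\pm}]}(\CC[t^{\pm}]/\mathfrak{m}_\lambda)\neq 0$ by the displayed isomorphism. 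If $\lambda$ were not in $\text{supp}_{(\CC^*)^p}(\psi_F(\underline{\CC}_{\CC^n}))$, then at every point of $\CC^n$ the stalk of $\psi_F(\underline{\CC}_{\CC^n})$ would be supported, as a complex of $\CC[t^{\pm}]$-modules, away from $\lambda$, so its derived fibre at $\lambda$ would vanish, contradicting the previous line. Therefore $\text{Exp}(\alpha)=\lambda\in\text{supp}_{(\CC^*)^p}(\psi_F(\underline{\CC}_{\CC^n}))$, as wanted.

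\medskip

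\noindent\emph{The main obstacle.} Everything past the displayed quasi-isomorphism is formal: it merely says that a holonomic $D$-module surviving a specialization has a non-zero de Rham complex there, and that a point outside the support of a complex of $\CC[t^{\pm}]$-modules annihilates the corresponding derived fibre. The real work is the bridge, which I expect to be the main obstacle: one needs the relative holonomicity of $N$ — so that $N\otimes^{L}_{\CC[\mathbf{s}]}\CC_\alpha$ genuinely is a complex of \emph{holonomic} $D_n$-modules at \emph{every} closed point $\alpha$, precisely where the failure of $N$ to be finitely generated over $\CC[\mathbf{s}]$, the theme of this paper, must be confronted — and the exact identification of $\psi_F(\underline{\CC}_{\CC^n})$ with $\mathrm{DR}_{\CC^n}(N)$ with the correct shift and, crucially, with the compatibility of the two base changes $-\otimes^{L}_{\CC[\mathbf{s}]}\CC_\alpha$ and $-\otimes^{L}_{\CC[t^{\pm}]}(\CC[t^{\pm}]/\mathfrak{m}_{\text{Exp}(\alpha)})$ through the transcendental map $t=\text{Exp}(s)$ (they agree after passing to the analytic structure sheaf, into which both $\CC[\mathbf{s}]$ and $\CC[t^{\pm}]$ map faithfully flatly near the relevant points). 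One should keep in mind that $N\otimes_{\CC[\mathbf{s}]}\CC_\alpha$ only surjects onto — and in general does not equal — the ``naive'' specialization $D_n\,f_1^{\alpha_1}\cdots f_p^{\alpha_p}\big/D_n\,f_1^{\alpha_1+1}\cdots f_p^{\alpha_p+1}$, so it is essential that the bridge be set up for $N$ itself. These are exactly the ingredients furnished by the proof of \cite[Proposition~1.7]{B}, via Sabbah's specialization functor together with the relative Riemann--Hilbert correspondence.
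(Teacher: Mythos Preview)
The paper does not give its own proof of this proposition: it is stated with attribution to \cite{B} and the surrounding text says only that the criterion ``can be extracted from the proof of \cite[Proposition~1.7]{B}.'' So there is nothing in the paper to compare your argument against line by line.

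That said, your sketch is exactly the kind of argument one extracts from \cite{B}: identify $\psi_F(\underline{\CC}_{\CC^n})$ (up to shift and the exponential change of variables) with the relative de Rham complex of $N$, base-change both sides to the point $\alpha$ (respectively $\text{Exp}(\alpha)$), and use that $\mathrm{DR}$ is conservative on holonomic complexes. You correctly flag the two genuine inputs---relative holonomicity of $N$ so that the specialized Koszul complex is a bounded complex of holonomic $D_n$-modules, and the compatibility of the $\CC[\mathbf{s}]$- and $\CC[t^{\pm}]$-base-changes through $\text{Exp}$---and you correctly locate them in \cite[Proposition~1.7]{B}. Your closing paragraph is honest about this: the formal steps are easy, the bridge is where the content lies, and that bridge is precisely what is cited rather than reproved here. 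So your proposal is consistent with what the paper invokes; just be aware that you are reconstructing the cited result, not an argument the present paper supplies.
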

Since $\mathcal{S}(F)$ is a closed subset of $(\CC^*)^p$, Theorem \ref{conj} follows if we can prove \eqref{goal} for $\alpha$ in an open Zariski dense subset of $Z(B_F)$. This observation motivates the following theorem:
\begin{manualtheorem}{B}\label{mainthm}
Let $\mathfrak{J}\subset D_n[s_1,\dots,s_p]$ be a left ideal, where $D_n$ is the Weyl algebra. Let $\mathfrak{p}$ be a minimal prime divisor of $\mathfrak{J}\cap \CC[s_1,\dots,s_p]$. Then there exists a polynomial $h\in \CC[s_1,\dots,s_p]\setminus \mathfrak{p}$ such that for all $\alpha\in Z(\mathfrak{p})\setminus Z(h)$ with maximal ideal $\mathfrak{m}_\alpha$,
$$\left(\frac{D_n[s_1,\dots,s_p]}{\mathfrak{J}}\right)\otimes_{\CC[s_1,\dots,s_p]}\left(\frac{\CC[s_1,\dots,s_p]}{\mathfrak{m}_\alpha}\right)\not=0.$$
\end{manualtheorem}
As mentioned above, Theorem \ref{conj} was recently proven in \cite{BVWZ} where it was deduced from the following theorem.
\begin{manualtheorem}{C}[\cite{BVWZ}]\label{thm}
Let $F=(f_1,\dots,f_p)$ be a tuple of polynomials on $\CC^n$. For every codimension $1$ irreducible component $H$ of $Z(B_F)$ there is a Zariski open subset $V\subset H$ such that for every $\alpha\in V$ with maximal ideal $\mathfrak{m}_\alpha$,
$$\frac{D_n[s_1,\dots,s_p]f_1^{s_1}\dots f_p^{s_p}}{D_n[s_1,\dots,s_p]f_1^{s_1+1}\dots f_p^{s_p+1}}\otimes_{\CC[s_1,\dots,s_p]}\frac{\CC[s_1,\dots,s_p]}{\mathfrak{m}_\alpha}\not=0.$$
\end{manualtheorem}
This theorem is a special case of Theorem \ref{mainthm}, since it only deals with codimension $1$ components. 
As explained above, it follows from Proposition \ref{crit} and this theorem that for every codimension $1$ component $H$ of $Z(B_F)$, $Exp(H)\subset \mathcal{S}(F)$. This restriction of only working with codimension $1$ components meant that the following result from \cite{Mai16} had to be used to prove Theorem \ref{conj} in \cite{BVWZ}:
\begin{manualtheorem}{D}[\cite{Mai16}]\label{translate}
Every irreducible component of $Z(B_F)$ can be translated along an integer vector into a codimension $1$ component of $Z(B_F)$.
\end{manualtheorem}
Theorem \ref{translate} is a corollary of Theorem \ref{conj}, since it is known from \cite[Theorem 1.3]{budur2017} that $\mathcal{S}(F)$ is a union of codimension one torsion translated subtori of $(\CC^*)^p$. Since our proof of Theorem \ref{conj} does not rely on Theorem \ref{translate}, our methods provide a new proof of Theorem \ref{translate}.
Let us also emphasise that, unlike in the methods of \cite{BVWZ}, in Theorem \ref{mainthm} no assumption on relative holonomicity of $D_n[s_1,\dots,s_p]/\mathfrak{J}$ is made.

The proof strategy for Theorem \ref{mainthm} is to specialize a suitable Gr\"obner basis for $\mathfrak{J}+\mathfrak{p}$ to a Gr\"obner basis for $\left(D_n[s_1,\dots,s_p]/\mathfrak{J}\right)\otimes_{\CC[s_1,\dots,s_p]}\left(\CC[s_1,\dots,s_p]/\mathfrak{m}_\alpha\right)$. Using a result on specializing Gr\"obner bases from \cite{LEYKIN} and \cite{OAKU} we can then conclude Theorem \ref{mainthm}. This method of proof works in every ring where Gr\"obner basis methods are available, and in particular does not use any $D_n$-module specific constructions. 

In the fourth section we do bring $D_n$-module theoretical constructions into focus, and restrict our attention to relatively holonomic $D_n[s_1,\dots,s_p]$-modules (see Definition \ref{relhol}). We then obtain the following improvement of Theorem \ref{mainthm}
\begin{manualtheorem}{E}\label{relholmain}
Let $M$ be a relatively holonomic $D_n[s_1,\dots,s_p]$-module. Then for all $\alpha\in Z(\Ann_{\CC[s_1,\dots,s_p]}(M))$ with maximal ideal $\mathfrak{m}_\alpha$,
$$M\otimes_{\CC[s_1,\dots,s_p]} \frac{\CC[s_1,\dots,s_p]}{\mathfrak{m}_\alpha}\not=0.$$
\end{manualtheorem}
In the final section we apply Theorem \ref{relholmain} to investigate the diagonal specialization of Bernstein-Sato ideals. By this we mean the following: denote by $\Delta=\{s_1=\dots=s_p\}$ the diagonal in $\CC^p$, and $f=\prod_{i=1}^pf_i$. It is then obvious that
$$Z(B_F)\cap \Delta \supset Z(b_f),$$
where $b_f$ is the usual Bernstein-Sato polynomial of $f$, and the affine space on which $b_f$ lives is identified with $\Delta$ in the obvious way. The reverse inclusion is a delicate matter, and it is expected that it fails in general, although it proved difficult to exhibit this failure with a concrete example. We are able to prove the following partial positive answer to the reverse inclusion:
\begin{manualtheorem}{F}\label{spec}
Let $F=(f_1,\dots,f_p)$ be a tuple of polynomials on $\CC^n$ and denote $f=\prod_{i=1}^pf_i$. Denote by $\nabla:\CC^n\to \CC^n$ the map $\nabla(\alpha_1,\dots,\alpha_p)=(\alpha_1+1,\dots,\alpha_p+1)$ and $\Delta=\{s_1=\dots=s_p\}\subset \CC^n$. Then
$$(Z(B_F)\cap \Delta) \setminus \bigcup_{i>0}\nabla^i(Z(B_F))\subset Z(b_f),$$
where the right hand side is identified with a subset of $\Delta$.
\end{manualtheorem}
This specialization result makes no assumption on the tuple $F$. For related conditional results on diagonal specialization we refer to \cite{2020arXiv200807447B} and \cite{2020arXiv200513502W}.

\subsubsection*{Acknowledgement}
We would like to thank Guillem Blanco, Nero Budur, Feng Hao, Alexander Van Werde and Lei Wu for the helpful comments and suggestions. 

The author is supported by a PhD Fellowship of the Research Foundation - Flanders.

\section{A lemma in commutative algebra, and a non-commutative corollary}
We denote $A=\CC[s_1,\dots,s_p]$. An ideal $\mathfrak{q}\subset A$ is called \textit{primary} if for all $x,y\in A$ with $xy\in \mathfrak{q}$, either $x\in \mathfrak{q}$ or $y\in \sqrt{\mathfrak{q}}$. If $\mathfrak{q}$ is primary, then $\mathfrak{p}=\sqrt{\mathfrak{q}}$ is a prime ideal. When we say that $\mathfrak{q}$ is $\mathfrak{p}$-primary we mean that $\mathfrak{q}$ is primary and $\sqrt{q}=\mathfrak{p}$. 

\begin{lemma}\label{lemma:sec3}
Let $\mathfrak{q}\subset A$ be a $\mathfrak{p}$-primary ideal. If $\mathfrak{q}\not=\mathfrak{p}$, then there exists an $f\in \mathfrak{p}\setminus \mathfrak{q}$ such that $f\mathfrak{p}\subset \mathfrak{q}$.
\end{lemma}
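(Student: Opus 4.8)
The plan is to work in the Noetherian ring $A=\CC[s_1,\dots,s_p]$ and exploit the standard structure of primary ideals. Since $\mathfrak{q}$ is $\mathfrak{p}$-primary with $\mathfrak{q}\neq\mathfrak{p}$, we have a strict inclusion $\mathfrak{q}\subsetneq\mathfrak{p}$, and $\mathfrak{p}=\sqrt{\mathfrak{q}}$, so in particular $\mathfrak{p}^N\subset\mathfrak{q}$ for some $N\geq 1$ (using that $A$ is Noetherian and $\mathfrak{p}$ is finitely generated).

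First I would pick the smallest integer $k\geq 1$ such that $\mathfrak{p}^{k+1}\subset\mathfrak{q}$; such a $k$ exists because $\mathfrak{p}^N\subset\mathfrak{q}$ for large $N$, and $k\geq 1$ because $\mathfrak{p}^1=\mathfrak{p}\not\subset\mathfrak{q}$ by hypothesis. Minimality means $\mathfrak{p}^k\not\subset\mathfrak{q}$, so I may choose $f\in\mathfrak{p}^k\setminus\mathfrak{q}$. Then $f\in\mathfrak{p}^k\subset\mathfrak{p}$ (since $k\geq 1$), and $f\notin\mathfrak{q}$, so $f\in\mathfrak{p}\setminus\mathfrak{q}$ as required. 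Moreover $f\mathfrak{p}\subset\mathfrak{p}^k\cdot\mathfrak{p}=\mathfrak{p}^{k+1}\subset\mathfrak{q}$, which gives the last condition.

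So the key steps in order are: (1) invoke Noetherianity to get $\mathfrak{p}^N\subset\mathfrak{q}$ for some $N$; (2) define $k$ as the least exponent with $\mathfrak{p}^{k+1}\subset\mathfrak{q}$ and check $k\geq 1$ using $\mathfrak{q}\neq\mathfrak{p}$ together with the primary hypothesis (which forces $\mathfrak{q}\subset\mathfrak{p}$, hence $\mathfrak{p}\not\subset\mathfrak{q}$ since the inclusion is strict); (3) pick $f$ witnessing $\mathfrak{p}^k\not\subset\mathfrak{q}$ and verify the three desired properties $f\in\mathfrak{p}$, $f\notin\mathfrak{q}$, $f\mathfrak{p}\subset\mathfrak{q}$. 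I do not expect a genuine obstacle here; the only subtlety worth stating carefully is that $\mathfrak{q}\subset\mathfrak{p}=\sqrt{\mathfrak{q}}$ always holds, so $\mathfrak{q}\neq\mathfrak{p}$ really does give a proper inclusion and hence $\mathfrak{p}^1\not\subset\mathfrak{q}$, which is exactly what makes the minimal $k$ land in the range $k\geq 1$ and keeps $f$ inside $\mathfrak{p}$. An alternative, essentially equivalent, route would be to localize at $\mathfrak{p}$ and argue with the Artinian local ring $A_\mathfrak{p}/\mathfrak{q}A_\mathfrak{p}$, taking $f$ in the socle, but the direct power-of-$\mathfrak{p}$ argument is cleaner and avoids checking that a socle element can be chosen in $A$ rather than $A_\mathfrak{p}$.
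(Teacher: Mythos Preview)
Your argument is correct. Noetherianity of $A$ gives $\mathfrak{p}^N\subset\mathfrak{q}$ for some $N$, the hypothesis $\mathfrak{q}\subsetneq\mathfrak{p}$ ensures the minimal such exponent is at least $2$, and any $f\in\mathfrak{p}^k\setminus\mathfrak{q}$ with $\mathfrak{p}^{k+1}\subset\mathfrak{q}$ does the job.

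The paper's proof reaches the same conclusion by a different construction: it passes to $A/\mathfrak{q}$, picks generators $g_1,\dots,g_m$ of the nilradical $N=\mathfrak{p}/\mathfrak{q}$, and inductively builds $f_m=g_1^{k_1-1}\cdots g_m^{k_m-1}$ (with each $k_i$ chosen minimally so that $f_{i-1}g_i^{k_i}=0$), obtaining a nonzero element annihilated by every $g_i$ and hence by $N$. Your approach is more direct and arguably cleaner: instead of building an annihilator of $N$ one generator at a time, you use the global nilpotency index of $\mathfrak{p}$ modulo $\mathfrak{q}$ and pick $f$ in the last surviving power. The paper's construction has the minor virtue of producing an explicit $f$ in terms of a chosen generating set, but for the purposes of the lemma your argument is both shorter and avoids the small case analysis (implicit in the paper) needed to check that $f_m$ actually lands in $N$.
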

\begin{proof}
Let $N\subset A/\mathfrak{q}$ be the nilradical, which is non-zero since $\mathfrak{q}\not=\mathfrak{p}$. Let $g_1,\dots,g_m\in N$ be a set of generators. Let $f_0=1\in A/\mathfrak{q}$. Using induction we define for $i=1,\dots,m$:
$$f_{i+1}=f_ig_{i+1}^{k_{i+1}-1},$$
where $k_{i+1}\in \ZZ$ is the smallest integer for which
$$f_{i}g_{i+1}^{k_{i+1}}=0.$$
Notice that such $k_{i+1}$ always exists, since each $g_i$ is nilpotent and that $k_{i+1}$ is always at least $1$ since $f_{i}$ is not zero, by induction. By construction we have that for each $i=1,\dots,m$, 
$$g_if_m=0.$$
Let $f$ be a lift of $f_m$ to $A$. Since $f_m\not=0$, $f\not\in \mathfrak{q}$, and since $f_m$ is nilpotent, $f\in\mathfrak{p}$. Let $p\in \mathfrak{p}$. Then in $A/\mathfrak{q}$ we can write $p+\mathfrak{q}=\sum_{i=1}^m a_ig_i$ for some $a_i\in A/\mathfrak{q}$. Then 
$$fp+\mathfrak{q}=\sum_{i=1}^ma_ig_if_m=0,$$
so that $fp\in \mathfrak{q}$.
\end{proof}

Every ideal $\mathfrak{J}\subset A$ has a \textit{primary decomposition}. This means that we can write $\mathfrak{J}=\bigcap_{i=1}^m\mathfrak{q}_i$ such that
\begin{enumerate}
\item every $\mathfrak{q}_i$ is primary, and
\item for all $1\leq j\leq m$, $\bigcap_{i=1}^m\mathfrak{q}_i\subsetneq \bigcap_{\substack{i=1\\i\not=j}}^m\mathfrak{q}_i$, and
\item The prime ideals $\sqrt{\mathfrak{q}_1},\dots,\sqrt{\mathfrak{q}_m}$ are pairwise distinct.
\end{enumerate}
The minimal (under the inclusion order) elements of the set $\{\sqrt{\mathfrak{q}_i}\}$ are uniquely determined by $\mathfrak{J}$, and are called the \textit{minimal prime divisors of }$\mathfrak{J}$.

\begin{theorem}\label{techThm}
Let $\mathfrak{J}\subset A$ be an ideal with primary decomposition $\mathfrak{J}=\bigcap_{i=1}^m \mathfrak{q}_i$, $m>1$. Let $\sqrt{\mathfrak{q}_j}$ be a minimal prime divisor.
Then there exists an $h\in A\setminus \mathfrak{q}_j$ such that $h\sqrt{\mathfrak{q}_j}\subset \mathfrak{J}$.
\end{theorem}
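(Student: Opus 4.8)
The plan is to write $\mathfrak p=\sqrt{\mathfrak q_j}$ and to produce the desired $h$ as a product $h=fg$, where $g$ is chosen to lie in all the primary components $\mathfrak q_i$ with $i\neq j$, and $f$ is chosen to handle the component $\mathfrak q_j$ itself by means of Lemma \ref{lemma:sec3}.

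First I would construct $g$. Because $\mathfrak p$ is a \emph{minimal} prime divisor of $\mathfrak J$ and the radicals $\sqrt{\mathfrak q_1},\dots,\sqrt{\mathfrak q_m}$ are pairwise distinct, for each $i\neq j$ the inclusion $\sqrt{\mathfrak q_i}\subseteq\mathfrak p$ cannot hold: it would force $\sqrt{\mathfrak q_i}=\mathfrak p=\sqrt{\mathfrak q_j}$, contradicting distinctness. Hence for each $i\neq j$ I may pick $x_i\in\sqrt{\mathfrak q_i}\setminus\mathfrak p$ together with an exponent $n_i\geq 1$ with $x_i^{n_i}\in\mathfrak q_i$, and set $g=\prod_{i\neq j}x_i^{n_i}$ (the empty product $1$ if $m=1$). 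Then $g\in\bigcap_{i\neq j}\mathfrak q_i$, while $g\notin\mathfrak p$ since $\mathfrak p$ is prime and no $x_i$ lies in $\mathfrak p$; in particular $g\notin\mathfrak q_j$ because $\mathfrak q_j\subseteq\mathfrak p$.

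Next I would obtain $f$. If $\mathfrak q_j\neq\mathfrak p$, then Lemma \ref{lemma:sec3} gives an $f\in\mathfrak p\setminus\mathfrak q_j$ with $f\mathfrak p\subseteq\mathfrak q_j$; if $\mathfrak q_j=\mathfrak p$, I simply take $f=1$, which satisfies $f\mathfrak p\subseteq\mathfrak q_j$ trivially and $f\notin\mathfrak q_j$ since $\mathfrak q_j$ is proper. With $h=fg$, the inclusion $h\mathfrak p\subseteq\mathfrak J=\bigcap_i\mathfrak q_i$ is then immediate: for $i\neq j$ it holds because $g\in\mathfrak q_i$, and for $i=j$ because $f\mathfrak p\subseteq\mathfrak q_j$. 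It remains to check $h\notin\mathfrak q_j$, and this is the only place where the primary hypothesis on $\mathfrak q_j$ is used: if $fg\in\mathfrak q_j$, then since $\mathfrak q_j$ is primary either $f\in\mathfrak q_j$ or $g\in\sqrt{\mathfrak q_j}=\mathfrak p$, and both possibilities have been excluded above.

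I do not expect a genuine obstacle here. The main subtlety is ensuring that the chosen $h$ stays out of $\mathfrak q_j$ — this is exactly what forces us to invoke the nontrivial case of Lemma \ref{lemma:sec3} and the defining property of a primary ideal — and a secondary point to be careful about is that it is the combination of minimality of $\mathfrak p$ with the distinctness of the radicals in the primary decomposition that rules out $\sqrt{\mathfrak q_i}\subseteq\mathfrak p$ for $i\neq j$, rather than minimality alone.
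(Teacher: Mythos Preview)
Your argument is correct and follows essentially the same approach as the paper's proof: produce $g\in\bigcap_{i\neq j}\mathfrak q_i\setminus\mathfrak p$, invoke Lemma~\ref{lemma:sec3} to get $f$ handling $\mathfrak q_j$, and set $h=fg$, checking $h\notin\mathfrak q_j$ via the primary property. The only cosmetic difference is that the paper obtains $g$ by the one-line observation that $\bigcap_{i\neq j}\mathfrak q_i\not\subset\mathfrak p$ (since a prime containing a finite intersection contains one of the ideals), whereas you build $g$ explicitly as $\prod_{i\neq j}x_i^{n_i}$.
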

\begin{proof}
We assume without loss of generality that $j=1$. We claim that $\bigcap_{j=2}^m\mathfrak{q}_i\not\subset \sqrt{\mathfrak{q}_1}$. If we would have $\bigcap_{j=2}^m\mathfrak{q}_i\subset \sqrt{\mathfrak{q}_1}$ then there is some $\mathfrak{q}_i$ contained in $\sqrt{\mathfrak{q}_1}$, since the latter is prime. Hence also $\sqrt{\mathfrak{q}_i}\subset \sqrt{\mathfrak{q}_1}$. Since $\sqrt{\mathfrak{q}_1}$ is a minimal prime, this must be an equality. However, by definition of the primary decomposition, $\sqrt{\mathfrak{q}_i}\not=\sqrt{\mathfrak{q}_1}$, and this contradiction proves the claim. Let $g\in \bigcap_{i=2}^m \mathfrak{q}_i\setminus \sqrt{\mathfrak{q}_1}$.

If $\sqrt{\mathfrak{q}_1}=\mathfrak{q}_1$, then $h=g$ satisfies the condition of the theorem. Namely, let $q\in \mathfrak{q}_1$. Then $gq\in \left(\bigcap_{i=2}^m \mathfrak{q}_i\right)\mathfrak{q}_1\subset  \bigcap_{i=1}^m \mathfrak{q}_i=\mathfrak{J}$.

If $\sqrt{\mathfrak{q}_1}\not=\mathfrak{q}_1$, we get from Lemma \ref{lemma:sec3} an $f\in \sqrt{\mathfrak{q}_1}\setminus \mathfrak{q}_1$ such that $f\sqrt{\mathfrak{q}_1}\subset \mathfrak{q}_1$. Set $h=fg$. We claim that this $h$ satisfies the conditions of the theorem. If $h\in \mathfrak{q}_1$, then since $\mathfrak{q}_1$ is primary, $f\in \mathfrak{q}_1$ or $g\in \sqrt{\mathfrak{q}_1}$, neither of which is possible by choice of $f$ and $g$. This means that indeed $h\in A\setminus \mathfrak{q}_1$. Let $p\in \sqrt{\mathfrak{q}_1}$. By choice of $f$, $pf\in \mathfrak{q}_1$, and thus $ph=gpf\in (\bigcap_{i=2}^m\mathfrak{q}_i)\mathfrak{q}_1\subset \bigcap_{i=1}^m\mathfrak{q}_i=\mathfrak{J}$, which concludes the proof.
\end{proof}

\begin{corollary}\label{sec3:cor}
Let $M$ be an $A$-module, and let $\mathfrak{p}$ be a minimal prime divisor of the ideal $\Ann_A(M)$. Then $\Ann_A(M\otimes_A(A/\mathfrak{p}))=\mathfrak{p}$.
\end{corollary}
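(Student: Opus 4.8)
The plan is to prove the two inclusions separately. The inclusion $\mathfrak{p}\subset \Ann_A(M\otimes_A(A/\mathfrak{p}))$ is immediate: the module $M\otimes_A(A/\mathfrak{p})$ is naturally an $A/\mathfrak{p}$-module, so $\mathfrak{p}$ annihilates it. The substance is the reverse inclusion, for which I would argue that no element of $A\setminus\mathfrak{p}$ can annihilate $M\otimes_A(A/\mathfrak{p})$, equivalently that this tensor product is nonzero after inverting such an element. So suppose for contradiction that $h\in A\setminus\mathfrak{p}$ annihilates $M\otimes_A(A/\mathfrak{p})$.

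The key input is Theorem \ref{techThm} applied to $\mathfrak{J}=\Ann_A(M)$. First I would dispose of the degenerate case where the primary decomposition of $\Ann_A(M)$ has $m=1$ component: then $\Ann_A(M)$ is itself $\mathfrak{p}$-primary with radical $\mathfrak{p}$, and one handles this directly (if $\Ann_A(M)=\mathfrak{p}$ there is nothing to prove since $\mathfrak{p}\subset\Ann_A(M\otimes A/\mathfrak{p})$ forces equality once we know the tensor product is nonzero, which follows because $M\neq 0$ and $M$ is a module over the domain $A/\mathfrak p$... actually here one should instead note $M\otimes_A A/\mathfrak p = M/\mathfrak p M$ and $\mathfrak p M \neq M$ by Nakayama-type reasoning at the generic point of $Z(\mathfrak p)$). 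In the main case $m>1$, Theorem \ref{techThm} produces $g\in A\setminus\mathfrak{q}_1$ (where $\mathfrak{q}_1$ is the $\mathfrak{p}$-primary component) with $g\,\mathfrak{p}\subset\Ann_A(M)$, i.e.\ $g\,\mathfrak{p}\cdot M=0$.

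Now I would combine $g$ with the contradiction hypothesis. Since $g\notin\mathfrak{q}_1$ and $\sqrt{\mathfrak q_1}=\mathfrak p$, no power of $g$ lies in $\mathfrak q_1\supset$ (nothing useful directly) — rather, the point is that $g\notin\mathfrak q_1$ together with $\mathfrak q_1$ being $\mathfrak p$-primary gives $g\notin\mathfrak p$ only if... so I must be careful: $g$ need not lie outside $\mathfrak p$. The cleaner route: localize at $\mathfrak p$. Let $S=A\setminus\mathfrak p$. Then $\Ann_A(M)_{\mathfrak p}=(\mathfrak q_1)_{\mathfrak p}$ since all other $\mathfrak q_i$ have radical not contained in $\mathfrak p$ (minimality of $\mathfrak p$), hence become the unit ideal after localizing. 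So $(M_{\mathfrak p})$ has annihilator $(\mathfrak q_1)_{\mathfrak p}$, which is $\mathfrak p_{\mathfrak p}$-primary in the local ring $A_{\mathfrak p}$. Then $M_{\mathfrak p}\otimes_{A_{\mathfrak p}} (A_{\mathfrak p}/\mathfrak p_{\mathfrak p}) = M_{\mathfrak p}/\mathfrak p_{\mathfrak p} M_{\mathfrak p}$, and I claim this is nonzero: $M_{\mathfrak p}$ is a nonzero module over the Noetherian local ring $A_{\mathfrak p}$ whose annihilator is $\mathfrak p_{\mathfrak p}$-primary, hence $M_{\mathfrak p}$ is supported exactly at the maximal ideal and is in particular a nonzero finitely-generated-in-the-relevant-sense module — more simply, $\mathfrak p_{\mathfrak p}=\sqrt{\Ann(M_{\mathfrak p})}$ is the unique minimal prime over the annihilator, so $\mathfrak p_{\mathfrak p}M_{\mathfrak p}\neq M_{\mathfrak p}$ (if they were equal, then for any $x\in M_{\mathfrak p}$, writing $x=\sum a_i x_i$ with $a_i\in\mathfrak p_{\mathfrak p}$ and iterating would, for finitely generated $M_{\mathfrak p}$, give a determinant-trick element of $1+\mathfrak p_{\mathfrak p}$ annihilating $M_{\mathfrak p}$, contradicting that $\sqrt{\Ann M_{\mathfrak p}}=\mathfrak p_{\mathfrak p}$; for general $M$ one reduces to finitely generated submodules). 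Therefore $(M\otimes_A A/\mathfrak p)_{\mathfrak p}\neq 0$, so $h\in S$ cannot annihilate $M\otimes_A A/\mathfrak p$, the desired contradiction.

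The main obstacle I anticipate is the finite-generation issue: in the intended application $M=D_n[s_1,\dots,s_p]/\mathfrak J$ is \emph{not} finitely generated over $A$, so the Nakayama/determinant-trick step must be run on a suitable finitely generated $A$-submodule or at the level of $A/\mathfrak p M$ directly, and one must check that $\Ann_A(M)$ still has a primary decomposition with $\mathfrak p$ as a genuine isolated associated prime — this is fine because $\Ann_A(M)$ is an ideal in the Noetherian ring $A$, so Theorem \ref{techThm} applies verbatim. The localization argument above sidesteps finite generation of $M$ as long as one is willing to invoke that a nonzero module over $A_{\mathfrak p}$ with annihilator primary to the maximal ideal has $\mathfrak p_{\mathfrak p}M_{\mathfrak p}\neq M_{\mathfrak p}$; if one prefers to avoid that, the alternative is to use Theorem \ref{techThm} more directly: from $g\mathfrak p M=0$ with $g\notin\mathfrak q_1$ one shows that the image of $g$ in $A/\mathfrak p$ is nonzero (since $\mathfrak q_1\subset\mathfrak p$ would be needed otherwise, but in fact $g$ could lie in $\mathfrak p$) — so the localization framing is the safer one to commit to in the writeup.
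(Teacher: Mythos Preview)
Your localization route has a genuine gap. The step ``so $M_{\mathfrak p}$ has annihilator $(\mathfrak q_1)_{\mathfrak p}$'' uses $\Ann_{A_{\mathfrak p}}(M_{\mathfrak p})=\Ann_A(M)_{\mathfrak p}$, which fails for modules that are not finitely generated over $A$ --- precisely the situation of interest. In fact $M_{\mathfrak p}$ can vanish even when $\mathfrak p$ is a minimal prime of $\Ann_A(M)$: take $A=\CC[s,t]$, $\mathfrak p=(s)$, and $M=\bigoplus_{n\ge 1}A/(s,t^n)$. Then $\Ann_A(M)=\bigcap_n(s,t^n)=(s)=\mathfrak p$, but each summand localizes to zero at $(s)$ since $t$ becomes a unit, so $M_{\mathfrak p}=0$. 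Thus $(M\otimes_A A/\mathfrak p)_{\mathfrak p}=0$ as well, and your contradiction scheme cannot get off the ground. Your suggested patch ``reduce to finitely generated submodules'' does not repair this: a finitely generated submodule $N\subset M$ will typically have strictly larger annihilator, so you lose control of $\Ann_A(N)$.

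The direct approach via Theorem~\ref{techThm}, which you sketch and then abandon, is exactly what the paper does, and your hesitation about it is misplaced. You worry that the element produced by Theorem~\ref{techThm} may lie in $\mathfrak p$; it may, but that is irrelevant. What Theorem~\ref{techThm} gives is $h\in A\setminus\mathfrak q_1$ with $h\mathfrak p\subset\Ann_A(M)$. Now take any $g\in\Ann_A(M\otimes_A A/\mathfrak p)$; then $gm\in\mathfrak p M$ for every $m\in M$, and multiplying by $h$ gives $ghm\in h\mathfrak p M=0$, so $gh\in\Ann_A(M)\subset\mathfrak q_1$. Since $\mathfrak q_1$ is primary and $h\notin\mathfrak q_1$, the definition of primary forces $g\in\sqrt{\mathfrak q_1}=\mathfrak p$. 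No finiteness hypothesis on $M$ is used anywhere. The point you missed is that the primary condition is asymmetric: from $gh\in\mathfrak q_1$ and $h\notin\mathfrak q_1$ (not $h\notin\mathfrak p$) one concludes $g\in\sqrt{\mathfrak q_1}$.
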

\begin{proof}
Let $\Ann_A(M)=\bigcap_{i=1}^m\mathfrak{q}_i$ be a primary decomposition of $\Ann_A(M)$ with $\sqrt{\mathfrak{q}_1}=\mathfrak{p}$. From Theorem \ref{techThm} we get an $h\in A\setminus \mathfrak{q}_1$ such that $h\mathfrak{p}\subset \Ann_A(M)$.

Let $g\in \Ann_{A}(M\otimes_A(A/\mathfrak{p}))$. This means that for every $m\in M$, $gm\in \mathfrak{p}M$. In other words, for every $m\in M$, there exist $n_1,\dots,n_k\in M$ and $p_1,\dots,p_k\in \mathfrak{p}$ such that $gm=\sum_{i=1}^kp_in_i$. We multiply this equation by $h$ on both sides to find $ghm=\sum_{i=1}^khp_in_i$. By construction, $hp_i\in \Ann_A(M)$, so that $ghm=0$ for all $m$, and thus $gh\in \Ann_A(M)$. In particular, $gh\in \mathfrak{q}_1$, so that either $h\in \mathfrak{q}_1$ or $g\in \sqrt{\mathfrak{q}_1}=\mathfrak{p}$. Since $h\not\in \mathfrak{q}_1$ by construction, we conclude that $g\in \mathfrak{p}$, which shows that  $\Ann_{A}(M\otimes_A(A/\mathfrak{p}))\subset \mathfrak{p}$. The other inclusion is obvious, and this concludes the proof.
\end{proof}

\begin{lemma}\label{surprise}
Let $\mathfrak{J}\subset D_n[s_1,\dots,s_p]$ be a left ideal. Let $\mathfrak{p}$ be a minimal prime divisor of $\mathfrak{J}\cap A$. Then $(\mathfrak{J}+R\mathfrak{p})\cap A=\mathfrak{p}$.
\end{lemma}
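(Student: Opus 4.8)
The plan is to reduce everything to Corollary \ref{sec3:cor} by viewing $R:=D_n[s_1,\dots,s_p]$ through its central subring $A=\CC[s_1,\dots,s_p]$. The inclusion $\mathfrak{p}\subseteq(\mathfrak{J}+R\mathfrak{p})\cap A$ is immediate, so all the work is in the reverse inclusion. The key elementary observation is that $A$ is central in $R$, and hence for any left ideal $I\subseteq R$ one has $\Ann_A(R/I)=I\cap A$: indeed, for $a\in A$, the element $a$ kills $R/I$ iff $aR=Ra\subseteq I$, which (applying this to $1\in R$, and conversely using that $I$ is a left ideal) holds iff $a\in I\cap A$. Applying this with $I=\mathfrak{J}$, the $A$-module $M:=R/\mathfrak{J}$ satisfies $\Ann_A(M)=\mathfrak{J}\cap A$, of which $\mathfrak{p}$ is by hypothesis a minimal prime divisor; in particular $\mathfrak{J}\cap A$ is a proper ideal, so $M\neq 0$.

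Next I would identify the tensor product. By right-exactness of $-\otimes_A(A/\mathfrak{p})$ applied to $\mathfrak{p}\hookrightarrow A\twoheadrightarrow A/\mathfrak{p}$, we get $M\otimes_A(A/\mathfrak{p})\cong M/\mathfrak{p}M$ as $A$-modules. Since $\mathfrak{p}\subseteq A$ is central we have $\mathfrak{p}R=R\mathfrak{p}$, so $\mathfrak{p}M$ is the image of $R\mathfrak{p}$ in $R/\mathfrak{J}$, namely $(\mathfrak{J}+R\mathfrak{p})/\mathfrak{J}$. Therefore $M\otimes_A(A/\mathfrak{p})\cong R/(\mathfrak{J}+R\mathfrak{p})$.

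Now Corollary \ref{sec3:cor}, applicable precisely because $\mathfrak{p}$ is a minimal prime divisor of $\Ann_A(M)=\mathfrak{J}\cap A$, gives $\Ann_A\bigl(M\otimes_A(A/\mathfrak{p})\bigr)=\mathfrak{p}$, that is, $\Ann_A\bigl(R/(\mathfrak{J}+R\mathfrak{p})\bigr)=\mathfrak{p}$. Since $\mathfrak{J}+R\mathfrak{p}$ is again a left ideal of $R$, the centrality identity from the first paragraph applies to it as well, yielding $\Ann_A\bigl(R/(\mathfrak{J}+R\mathfrak{p})\bigr)=(\mathfrak{J}+R\mathfrak{p})\cap A$. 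Combining the last two equalities gives $(\mathfrak{J}+R\mathfrak{p})\cap A=\mathfrak{p}$, which is the claim. (As a sanity check, the conclusion of the corollary forces $M\otimes_A(A/\mathfrak{p})\neq 0$, i.e.\ $\mathfrak{J}+R\mathfrak{p}\neq R$, so no degenerate case arises.)

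There is no substantial obstacle here: the entire content of the lemma is already packaged in Corollary \ref{sec3:cor}, and the argument is a formal manipulation of annihilators and a base change. The only point demanding care is the non-commutativity of $R$ — one must consistently invoke that $A$ lies in the center of $D_n[s_1,\dots,s_p]$ both when passing between left ideals of $R$ and their annihilators in $A$, and when rewriting $\mathfrak{p}R$ as $R\mathfrak{p}$ in the identification of the tensor product with $R/(\mathfrak{J}+R\mathfrak{p})$.
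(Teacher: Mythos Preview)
Your proof is correct and follows essentially the same route as the paper's: both identify $\Ann_A(R/\mathfrak{J})=\mathfrak{J}\cap A$ via centrality of $A$, invoke Corollary~\ref{sec3:cor}, and establish the isomorphism $(R/\mathfrak{J})\otimes_A(A/\mathfrak{p})\cong R/(\mathfrak{J}+R\mathfrak{p})$ to translate the annihilator statement into the desired intersection. The only cosmetic differences are that you state the identity $\Ann_A(R/I)=I\cap A$ once for general left ideals and apply it twice, and you obtain the tensor isomorphism via right-exactness rather than by writing down explicit inverse maps.
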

\begin{proof}
We denote $R=D_n[s_1,\dots,s_p]$. We regard $R/\mathfrak{J}$ as an $A$-module. As such we claim that $\Ann_{A}(R/\mathfrak{J})=\mathfrak{J}\cap A$. To see this, let $f\in \Ann_{A}(R/\mathfrak{J})$, so that $f\cdot (1+\mathfrak{J})=f+\mathfrak{J}$ is zero in $R/\mathfrak{J}$, which means that $f\in \mathfrak{J}\cap A$. For the other inclusion let $f\in \mathfrak{J}\cap A$. Then 
$$f\cdot(P+\mathfrak{J})=fP+\mathfrak{J}=Pf+\mathfrak{J},$$
where in the second equality we use that $A$ is contained in the center of $R$. Since $f\in \mathfrak{J}$, which is a left ideal, we conclude that $Pf\in\mathfrak{J}$, so that $f\cdot(P+\mathfrak{J})=0$ and hence indeed $f\in \Ann_A(R/\mathfrak{J})$.

We apply Corollary \ref{sec3:cor} to find that $\Ann_A((R/\mathfrak{J})\otimes_A(A/\mathfrak{p}))=\mathfrak{p}$. The lemma  follows when we prove that $\Ann_A((R/\mathfrak{J})\otimes_A(A/\mathfrak{p}))=(\mathfrak{J}+R\mathfrak{p})\cap A$, which in turn follows, as above, when we prove that we have an isomorphism
$$(R/\mathfrak{J})\otimes_A(A/\mathfrak{p})\cong R/(\mathfrak{J}+R\mathfrak{p}).$$
There is a well-defined map from the left hand side to the right hand side given by
$$(P+\mathfrak{J})\otimes(f+\mathfrak{p})\mapsto Pf+(\mathfrak{J}+R\mathfrak{p}).$$ 
Note that this is well-defined because $A$ is contained in the center of $R$. In the other direction we have the map
$$P+(\mathfrak{J}+R\mathfrak{p}) \mapsto (P+\mathfrak{J})\otimes(1+\mathfrak{p}).$$
These maps are inverse to each other, which proves the claim.
\end{proof}

\section{Gr\"obner bases for $D_n$-modules}
In this section we recall some facts about Gr\"obner bases for ideals in $D_n$ and $R=D_n[s_1,\dots,s_p]$, and then prove Theorem \ref{mainthm}. We denote as in the previous section $A=\CC[s_1,\dots,s_p]$. We will always implicitly regard $A$ and $D_n$ as subsets of $R$.

In $D_n$ we consider the set of standard monomials of the form $x^\alpha\partial^\beta$, $\alpha,\beta\in\ZZ_{\geq 0}^n$. On these monomials we consider the lexicographical order with
\begin{equation}\label{DnOrder}\partial_1>\dots>\partial_n>x_n>\dots>x_1.\end{equation}
Any operator in $D_n$ is a finite $\CC$-linear combination of standard monomial in a unique way. Writing $P\in D_n$ as a linear combination $P=\sum_{\alpha,\beta}c_{\alpha,\beta}x^\alpha\partial^\beta$ with non-zero  $c_{\alpha,\beta}$ we denote by $\lm_{D_n}(P)$ the largest monomial $x^\alpha\partial^\beta$ with respect to the lexicographical order. For an ideal $\mathfrak{I}\subset D_n$ we denote $\lm_{D_n}(\mathfrak{I})=\{\lm_{D_n}(P)\mid P\in \mathfrak{I}\}$. A finite generating set  $G\subset \mathfrak{I}$ is called a Gr\"obner basis for $\mathfrak{I}$ if the following is true: for every $m\in \lm_{D_n}(\mathfrak{I})$ there exists a $P\in G$ such that $\sigma(\lm_{D_n}(P))\mid\sigma(m)$ where $\sigma$ denotes the operation of taking the principal symbol. More explicitly this says that when $x^{\alpha}\partial^\beta\in \lm_{D_n}(\mathfrak{I})$ there exists a $P\in G$ with $\lm_{D_n}(P)=x^{\alpha'}\partial^{\beta'}$ and $(\alpha',\beta')$ is entry-wise less than or equal to $(\alpha,\beta)$. To simply the notation we will write this divisibility notion simply as $x^{\alpha'}\partial^{\beta'}\mid x^{\alpha}\partial^{\beta}$, but we emphasise that this does not mean that there exists some $P\in D_n$ for which $Px^{\alpha'}\partial^{\beta'}=x^{\alpha}\partial^{\beta}$. It follows immediately from the definition that $\mathfrak{I}=D_n$ if and only if any Gr\"obner basis for $\mathfrak{I}$ contains a unit, i.e. an element of $\CC$. 

In $R$ we consider the set of standard monomials of the form $x^\alpha\partial^\beta s^\gamma$, $\alpha,\beta\in \ZZ_{\geq 0}^n$, $\gamma\in \ZZ_{\geq 0}^p$. On these monomials we consider the lexicographical order with
\begin{equation}\label{ROrder}\partial_1>\dots>\partial_n>x_n>\dots>x_1>s_p>\dots>s_1.\end{equation}
Any operator in $R$ is a finite $\CC$-linear combination of standard monomial in a unique way. We denote by $\lm_{R}(P)$ the leading monomial of an operator $P\in R$  with respect to the lexicographical order. For an ideal $\mathfrak{Q}\subset R$ we denote $\lm_R(\mathfrak{Q})=\{\lm_{R}(P)\mid P\in \mathfrak{Q}\}$. A finite generating set  $G\subset \mathfrak{Q}$ is called a Gr\"obner basis for $\mathfrak{Q}$ if the following \textit{Gr\"obner property} is true: for every $m\in \lm_{R}(\mathfrak{Q})$ there exists a $P\in G$ such that $\sigma(\lm_{R}(P))\mid\sigma(m)$ where $\sigma$ denotes the operation of taking the principal symbol. More explicitly this says that when $x^{\alpha}\partial^\beta s^\gamma\in \lm_{R}(\mathfrak{Q})$ there exists a $P\in G$ with $\lm_{R}(P)=x^{\alpha'}\partial^{\beta'}s^{\gamma'}$ and $(\alpha',\beta',\gamma')$ is entry-wise less than or equal to $(\alpha,\beta,\gamma)$. Again we denote this divisibility relation simply by $x^{\alpha'}\partial^{\beta'}s^{\gamma'}\mid x^{\alpha}\partial^\beta s^\gamma$.
Regarding the $s_1,\dots,s_p$ as parameters we will also need the following. Any $P\in R$ can be written uniquely as
\begin{equation}\label{P}P=\sum_{\alpha,\beta}h_{\alpha,\beta}(s_1,\dots,s_p)x^{\alpha}\partial^{\beta},\end{equation}
with $h_{\alpha,\beta}\not=0$.
 We denote the \textit{parametric leading monomial} of $P$ by $\plm(P)=x^\alpha\partial^\beta$ and the \textit{parametric leading coefficient} op $P$ by  $\plc(P)=h_{\alpha,\beta},$ where $x^{\alpha}\partial^\beta$ is the largest monomial occurring in \eqref{P} with respect to the monomial order \eqref{DnOrder}.
 
 In the commutative ring $A$ consider the lexicographical monomial order with
\begin{equation}\label{AOrder}s_p>\dots>s_1.\end{equation}
We denote by $\lm_{A}(f)$ the leading monomial of $f\in A$. 

 In all three rings $D_n,A$ and $R$ a Gr\"obner basis for an ideal can be obtained by applying Buchbergers algorithm to an arbitrary generating set. 
 We note the following relations between the leading monomials. For $f\in A$: $\lm_R(f)=\lm_A(f)$. For $P\in D_n$, $\lm_{D_n}(P)=\lm_R(P)$. For $P\in R$, $\lm_A(\plc(P))\plm(P)=\lm_R(P)$. Because of these equalities we will from now on suppress the notation of the ring and simply write $\lm$ for leading monomials.
 
 \begin{lemma}\label{intersectionGB}
Let $\mathfrak{Q}\subset R$ be an ideal and let $G$ be a Gr\"obner basis for $\mathfrak{Q}$ with respect to the order \eqref{ROrder}. Then $G\cap A$ is a Gr\"obner basis for $\mathfrak{Q}\cap A$ with respect to the order \eqref{AOrder}.
 \end{lemma}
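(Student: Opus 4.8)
The plan is to verify the Gröbner property directly: I need to show that for every $m \in \lm_A(\mathfrak{Q} \cap A)$ there is some $f \in G \cap A$ with $\lm(f) \mid m$. So fix $f_0 \in \mathfrak{Q} \cap A$ and let $m = \lm_A(f_0) = \lm_R(f_0)$, using the compatibility $\lm_A = \lm_R$ on $A$ noted just before the lemma. Since $f_0 \in \mathfrak{Q}$ and $G$ is a Gröbner basis for $\mathfrak{Q}$, there is some $P \in G$ with $\lm_R(P) \mid \lm_R(f_0) = m$. The key point is that $m$ is a pure monomial in the $s$-variables — it has no $x$ or $\partial$ factors — and under the order \eqref{ROrder} every monomial involving an $x$ or a $\partial$ is strictly larger than every monomial in the $s_i$ alone. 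Therefore $\lm_R(P) \mid m$ forces $\lm_R(P)$ to itself be a pure $s$-monomial, i.e. $\lm_R(P) = x^0 \partial^0 s^{\gamma'}$ for some $\gamma'$, with $s^{\gamma'} \mid m$.

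The next step is to upgrade ``$\lm_R(P)$ is a pure $s$-monomial'' to ``$P \in A$''. Write $P = \sum_{\alpha,\beta} h_{\alpha,\beta}(s)\, x^\alpha \partial^\beta$ as in \eqref{P}. Then $\lm_R(P) = \lm_A(\plc(P)) \cdot \plm(P)$, again by the relation recorded before the lemma, and $\plm(P) = x^{\alpha_0}\partial^{\beta_0}$ for the largest $x^\alpha\partial^\beta$ (in the order \eqref{DnOrder}) appearing in the expansion. Since $\lm_R(P)$ has no $x$ or $\partial$ part, we must have $\alpha_0 = \beta_0 = 0$, and because the order \eqref{DnOrder} on the $x^\alpha\partial^\beta$ is such that $x^0\partial^0 = 1$ is the \emph{smallest} monomial, $\plm(P) = 1$ forces every $x^\alpha\partial^\beta$ occurring in $P$ to equal $1$; that is, $P = h_{0,0}(s) \in A$. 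Hence $P \in G \cap A$, and $\lm(P) = \lm_A(h_{0,0}) = \lm_R(P) \mid m$, which is exactly the Gröbner property for $G \cap A$ with respect to \eqref{AOrder}.

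It remains to check the two bookkeeping conditions: that $G \cap A$ actually lies in $\mathfrak{Q} \cap A$ (immediate, since $G \subset \mathfrak{Q}$ and $G \cap A \subset A$) and that $G \cap A$ \emph{generates} $\mathfrak{Q} \cap A$ as an ideal of $A$. The latter is the standard consequence of the Gröbner property together with a division/Noetherianity argument: given $f \in \mathfrak{Q} \cap A$, reduce it modulo $G \cap A$ using leading-monomial division in $A$; the Gröbner property guarantees that as long as the remainder is nonzero its leading monomial is divisible by some $\lm(f)$, $f \in G\cap A$, so the process terminates with remainder $0$, expressing $f$ as an $A$-combination of $G \cap A$. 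I expect the only place demanding genuine care — and the crux of the argument — is the passage from a leading monomial being $s$-pure to the \emph{whole} operator lying in $A$; this is where it is essential that the block order \eqref{ROrder} places all of $\partial_1,\dots,\partial_n,x_n,\dots,x_1$ above all of $s_p,\dots,s_1$, and that within the $D_n$-part the order \eqref{DnOrder} makes $1$ minimal, so that controlling $\plm(P)$ controls the entire support of $P$ in the $x,\partial$ directions.
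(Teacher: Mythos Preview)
Your proof is correct and follows essentially the same route as the paper: pick $f\in\mathfrak{Q}\cap A$, use the Gr\"obner property of $G$ to find $P\in G$ with $\lm(P)\mid\lm(f)$, observe that this forces $\lm(P)$ to be a pure $s$-monomial and hence (by the block structure of the order \eqref{ROrder}) $P\in A$, and finish with the standard reduction argument to get generation. The only difference is that you spell out the implication ``$\lm_R(P)$ is a pure $s$-monomial $\Rightarrow P\in A$'' more carefully via $\plm(P)=1$, whereas the paper dispatches it in one line.
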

 \begin{proof}
 Let $f\in \mathfrak{Q}\cap A$. There exists a $P\in G$ such that $\lm(P)\mid \lm(f)$. This means that $\lm(P)\in A$, and thus by definition of the order \eqref{ROrder}, $P\in A$. We conclude that for every $f\in \mathfrak{Q}\cap A$ there exists a $g\in G\cap A$ such that $\lm(g)\mid \lm(f)$. It only remains to show that $G\cap A$ generates $\mathfrak{Q}\cap A$. This follows from the preceding statement, since we can reduce $f$ to zero modulo $G\cap A$ by iteratively canceling leading monomials.
 \end{proof}
 \begin{lemma}\label{reducedGB}
 Let $\mathfrak{Q}\subset R$ be an ideal. Then there exists a Gr\"obner basis $G$ for $\mathfrak{Q}$ with respect to the order \eqref{ROrder} such that for all $P\in G\setminus A$, $\plc(P)\not\in \mathfrak{Q}\cap A$.
 \end{lemma}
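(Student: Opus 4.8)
The plan is to begin with an arbitrary Gr\"obner basis for $\mathfrak{Q}$ and then repeatedly ``clean up'' the elements lying outside $A$: whenever the parametric leading coefficient of such an element lies in $\mathfrak{Q}\cap A$, we subtract off its entire parametric leading term. Concretely, I would first apply Buchberger's algorithm to obtain a Gr\"obner basis $G_0$ for $\mathfrak{Q}$ with respect to \eqref{ROrder}, and record, via Lemma \ref{intersectionGB}, that $G_0\cap A$ is a Gr\"obner basis for $\mathfrak{Q}\cap A$ with respect to \eqref{AOrder}; this set will be frozen for the entire argument.

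The construction rests on three elementary remarks about an operator $P\in R$ with $\plc(P)\in\mathfrak{Q}\cap A$. First, $\plc(P)\plm(P)=\plm(P)\plc(P)\in\mathfrak{Q}$, since $A$ is central in $R$ and $\mathfrak{Q}$ is a left ideal; writing $\plc(P)$ as an $A$-combination of elements of $G_0\cap A$ shows, further, that $\plc(P)\plm(P)$ lies in the left ideal generated by $G_0\cap A$. Second, $\plc(P)\plm(P)$ is exactly the term of $P$ attached to the monomial $\plm(P)$ in the representation \eqref{P}, so $P':=P-\plc(P)\plm(P)$ has strictly smaller parametric leading monomial; since the $s_i$ are the smallest variables in \eqref{ROrder}, the identity $\lm(P)=\lm_A(\plc(P))\plm(P)$ then gives $\lm(P')<\lm(P)$. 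Third, because $\lm_A(g)\mid\lm_A(\plc(P))$ for some $g\in G_0\cap A$ and $\lm_A(\plc(P))\mid\lm(P)$, the monomial $\lm(P)$ is divisible by $\lm(g)$ for some $g\in G_0\cap A$.

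The heart of the argument is the following. Suppose $G$ is a Gr\"obner basis for $\mathfrak{Q}$ with $G\cap A=G_0\cap A$, and $P\in G\setminus A$ satisfies $\plc(P)\in\mathfrak{Q}\cap A$; then $G':=(G\setminus\{P\})\cup\{P'\}$, with $P'$ omitted if it vanishes, is again a Gr\"obner basis for $\mathfrak{Q}$. It generates $\mathfrak{Q}$ because $P=P'+\plc(P)\plm(P)$ with $P'\in\mathfrak{Q}$ and $\plc(P)\plm(P)$ in the left ideal generated by $G_0\cap A\subseteq G\setminus\{P\}$ (first remark). The Gr\"obner property is verified monomial by monomial: any $m\in\lm(\mathfrak{Q})$ divisible by $\lm(Q)$ for some $Q\in G$ with $Q\neq P$ is still covered by $G'$, and if $m$ was covered only by $\lm(P)$ then by the third remark it is divisible by $\lm(g)$ for some $g\in G_0\cap A\subseteq G'$. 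Iterating the replacement on a fixed $P$ produces a chain $P,P',P'',\dots$ of operators in $\mathfrak{Q}$ with strictly decreasing leading monomials; as the lexicographic order \eqref{ROrder} is a well-order on standard monomials this chain is finite, and it terminates either at $0$ or at an operator whose parametric leading coefficient lies outside $\mathfrak{Q}\cap A$ (if a member of the chain lay in $A\setminus\{0\}$ its parametric leading coefficient would equal itself, hence lie in $\mathfrak{Q}\cap A$, and one more step would kill it). Applying this to each of the finitely many elements of $G_0\setminus A$ in turn yields the desired Gr\"obner basis.

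The one point that requires care — and the nearest thing to an obstacle — is the bookkeeping in the second and third remarks: one must use that \eqref{ROrder} refines \eqref{DnOrder} with the $s_i$ at the bottom, so that $\plm(P)$ genuinely controls $\lm(P)$ and $P\mapsto P'$ is a true reduction, and one must keep $G\cap A$ equal to $G_0\cap A$ throughout so that Lemma \ref{intersectionGB} and the third remark stay in force. The rest is routine, and I anticipate no real difficulty.
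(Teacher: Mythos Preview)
Your proof is correct and follows essentially the same approach as the paper: start from an arbitrary Gr\"obner basis, use Lemma~\ref{intersectionGB} to control $\mathfrak{Q}\cap A$, and iteratively modify each $P\in G\setminus A$ so that $\plc(P)\not\in\mathfrak{Q}\cap A$, observing at each step that $\lm(P)$ is already divisible by the leading monomial of some element of $G\cap A$ and hence $P$ is redundant for the Gr\"obner property. The only cosmetic difference is in the reduction step itself: the paper applies the division algorithm to $\plc(P)$ modulo $G\cap A$ and replaces $\plc(P)$ by the remainder $r$ in one shot, whereas you simply subtract off the entire parametric leading term whenever $\plc(P)\in\mathfrak{Q}\cap A$ and rely on the well-ordering of \eqref{ROrder} for termination; both variants rest on the same redundancy observation and reach the same conclusion.
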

 \begin{proof}
Let $P\in G\setminus A$ and write
$$P=\plc(P)\plm(P)+Q.$$
 By Lemma \ref{intersectionGB} $G\cap A=\{f_1,\dots,f_m\}$ is a Gr\"obner basis for $\mathfrak{Q}\cap A$. Using the division algorithm \cite[Proposition 1.6.1]{IVA} we find $q_1,\dots,q_m,r$ such that
 $$\plc(P)=\sum_{i=1}^mq_if_i+r,$$
 where $\lm(r)$ is not divisible by any $\lm(f_i)$. This means that
 $$P=\left(\sum_{i=1}^mq_if_i+r\right)\plm(P)+Q\equiv r\plm(P)+Q=:P',$$
 where $\equiv$ denotes equivalence modulo $\mathfrak{Q}$.
 If $P'=0$ we define $G'=G\setminus \{P\}$ and if $P'\not=0$ then we define $G'=G\setminus\{P\}\cup \{P'\}$.
 
 We claim that $G'$ is still a Gr\"obner basis for $\mathfrak{Q}$. This is clear if $P=P'$ so assume that $P\not=P'$. We first remark that $G'$ clearly still generates $\mathfrak{Q}$, and hence we need only verify the Gr\"obner property. If $\lm(P')=\lm(P)$ then the Gr\"obner property is clearly still satisfied, so assume that $\lm(P')\not=\lm(P)$. This assumption means that $\lm(\plc(P))$ was divisible by $\lm(f_i)$ for some $f_i$. To see this, note that if this were not the case then by the definition of the division algorithm, $\lm(r)=\lm(\plc(P))$, and hence $\lm(P)=\lm(P')$, contradicting our assumption. This means that $\lm(P)=\lm(\plc(P))\plm(P)$ is divisible by some $\lm(f_i)$. Hence any element $Q\in \mathfrak{Q}$ for which $\lm(P)\mid \lm(Q)$ also satisfies $\lm(f_i)\mid \lm(Q)$. It follows that $G'$ is still a Gr\"obner basis, since $P$ was redundant for the Gr\"obner basis property.
  
  We keep repeating this procedure until no $P\in G\setminus A$ is changed by applying this reduction. This is clearly a finite process. Once we are done we thus find that for all $P\in G\setminus A$, $\lm(\plc(P))$ is not divisible by any $\lm(f_i)$. Since the $\{f_1,\dots,f_m\}$ form a Gr\"obner basis for $\mathfrak{Q}\cap A$, this means that for all $P\in G\setminus A$, $\plc(P)\not\in \mathfrak{Q}\cap A$.
 \end{proof}

For any $\alpha\in \CC^p$ there is a specialization map $q_\alpha:R\to D_n$ defined by
 $$q_\alpha(P(s_1,\dots,s_p))=P(\alpha_1,\dots,\alpha_p).$$
Notice that for any ideal $\mathfrak{Q}\subset R$ we have
$$(R/\mathfrak{Q})\otimes_A (A/\mathfrak{m}_\alpha) \cong D_n/q_\alpha(\mathfrak{Q}),$$
where $\mathfrak{m}_\alpha$ denotes the maximal ideal corresponding to $\alpha$.
\begin{theorem}[\cite{LEYKIN}, Lemma 2.5]\label{mainRef}
Let $\mathfrak{Q}\subset R$ be an ideal, let $\mathfrak{p}$ be an ideal of the ring $A$ contained in $\mathfrak{Q}$, and let $G\subset \mathfrak{Q}$ be a Gr\"obner basis for $\mathfrak{Q}$ with respect to the order \eqref{ROrder}. Let $h=\prod_{P\in G\setminus \mathfrak{p}}\plc(P)$. Let $\alpha\in Z(\mathfrak{p})\setminus Z(h)$. Then $q_{\alpha}(G\setminus \mathfrak{p})$ is a Gr\"obner basis for $q_{\alpha}(\mathfrak{Q})$ with respect to the order \eqref{DnOrder}.
\end{theorem}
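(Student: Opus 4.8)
The plan is to derive Theorem~\ref{mainRef} as an instance of the stability of Gr\"obner bases under specialization: one can either quote \cite[Lemma~2.5]{LEYKIN} (see also \cite{OAKU}) directly, or reprove it along the following lines. The preliminary observation is that $\alpha\notin Z(h)$ forces $\plc(P)(\alpha)\ne 0$ for every $P\in G\setminus\mathfrak p$, and that, together with the identity $\lm(P)=\lm(\plc(P))\,\plm(P)$ and the fact that the order \eqref{ROrder} ranks all of the $x,\partial$ above all of the $s$, this gives $\lm(q_\alpha(P))=\plm(P)$. Thus the leading monomials of the surviving generators $q_\alpha(P)$, $P\in G\setminus\mathfrak p$, are exactly the parametric leading monomials $\plm(P)$. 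I would also record that $\lm$ and $\plm$ are multiplicative and that a specialization can never increase a leading monomial, so that $\lm\big(q_\alpha(C)\,q_\alpha(P)\big)=\lm(q_\alpha(C))\,\plm(P)\le\plm(CP)$ for $P\in G\setminus\mathfrak p$ and $C\in R$, and that $\lm(CP)\le\lm(Q)$ in the order \eqref{ROrder} implies $\plm(CP)\le\plm(Q)$, since that order compares the $x,\partial$-part first.

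I would then split the statement into the two defining conditions of a Gr\"obner basis. For generation: $q_\alpha\colon R\to D_n$ is a surjective ring homomorphism, so $q_\alpha(\mathfrak Q)=\sum_{P\in G}D_n\,q_\alpha(P)$, and the terms with $P\in G\cap\mathfrak p$ may be discarded because $q_\alpha(\mathfrak p)=0$ --- an element of $\mathfrak p$ is a sum $\sum_iP_if_i$ with $f_i\in\mathfrak p\cap A$ (this is where the hypothesis on $\mathfrak p$ enters; in the application $\mathfrak p$ is extended from $A$), and $q_\alpha(\sum_iP_if_i)=\sum_iq_\alpha(P_i)f_i(\alpha)=0$ since $\alpha\in Z(\mathfrak p\cap A)$. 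Hence $q_\alpha(G\setminus\mathfrak p)$ generates $q_\alpha(\mathfrak Q)$.

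The remaining, and principal, point is the Gr\"obner property, which I would attack through standard representations. Given $\bar Q\in q_\alpha(\mathfrak Q)$, lift it to $Q\in\mathfrak Q$ and, using that $G$ is a Gr\"obner basis, write a standard representation $Q=\sum_{P\in G}C_PP$ with $\lm(C_PP)\le\lm(Q)$; applying $q_\alpha$ and dropping the terms with $P\in G\cap\mathfrak p$ gives $\bar Q=\sum_{P\in G\setminus\mathfrak p}q_\alpha(C_P)\,q_\alpha(P)$, in which each term is divisible by $\plm(P)=\lm(q_\alpha(P))$ and has leading monomial at most $\plm(CP)\le\plm(Q)$. The task --- and, I expect, the main obstacle --- is to upgrade this to a \emph{standard} representation of $\bar Q$ with respect to $q_\alpha(G\setminus\mathfrak p)$, from which the Gr\"obner property follows by the standard-representation characterization of Gr\"obner bases; equivalently, one must check that the S-polynomials of the $q_\alpha(P)$, $P\in G\setminus\mathfrak p$, reduce to zero modulo $q_\alpha(G\setminus\mathfrak p)$. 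One cannot simply transport Buchberger's algorithm through $q_\alpha$, because an intermediate remainder may have its leading term annihilated by $q_\alpha$ when some intermediate leading coefficient vanishes at $\alpha$; the resolution is to argue directly at the level of standard representations, using that $q_\alpha$ commutes with forming S-polynomials when the relevant leading coefficients are nonzero at $\alpha$, and that the inequality $\lm(q_\alpha(C)q_\alpha(P))\le\lm(CP)$ combined with $\lm(q_\alpha(P))=\plm(P)$ is exactly what is needed to certify the defining inequality of a standard representation after specialization. This bookkeeping is the content of \cite[Lemma~2.5]{LEYKIN}.
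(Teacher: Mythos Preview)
Your proposal is correct and matches the paper's approach exactly: the paper's entire proof is the single sentence ``Apply \cite{LEYKIN} Lemma 2.5 without any $y$-parameters,'' which is precisely the citation you lead with. Your additional sketch of how the specialization argument runs (preservation of $\plm$ under $q_\alpha$ when $h(\alpha)\ne 0$, vanishing of $q_\alpha$ on $\mathfrak p$ when $\mathfrak p$ is extended from $A$, and transport of standard representations) is sound and goes beyond what the paper provides, with your parenthetical about $\mathfrak p$ being extended from $A$ correctly flagging the hypothesis actually needed and used in the application.
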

\begin{proof}
Apply \cite{LEYKIN} Lemma 2.5 without any $y$-parameters.
\end{proof}
We remark that a statement similar to Theorem \ref{mainRef} also appeared in \cite{OAKU}, and that the comprehensive Gr\"obner bases from \cite{paramGB} can also be used to give a simple proof of this result.

\begin{proof}[Proof of Theorem \ref{mainthm}]
Let $G$ be a Gr\"obner basis for $\mathfrak{Q}=\mathfrak{J}+R\mathfrak{p}$ with respect to the order \eqref{ROrder}. By Lemma \ref{reducedGB} we can assume that for all $P\in G\setminus A$, 
$$\plc(P)\not\in \mathfrak{Q}\cap A=\mathfrak{p},$$ 
where this equality follows from Corollary \ref{surprise}. 

Since $\mathfrak{Q}\cap A=\mathfrak{p}$, clearly we have $G\cap A\subset \mathfrak{p}$, and hence $G\cap \mathfrak{p}=G\cap A$, and hence also 
$$G\setminus A=G\setminus \mathfrak{p}.$$

Putting the preceding two statements together we conclude that for all $P\in G\setminus\mathfrak{p}$, $\plc(P)\not\in \mathfrak{p}$. Since $\mathfrak{p}$ is prime, this means that 
$$h=\prod_{P\in G\setminus \mathfrak{p}}\plc(P)\not\in \mathfrak{p}$$ 

 Since $G\cap\mathfrak{p}=G\cap A$, we have that $(G\setminus \mathfrak{p})\cap A=\emptyset$, and hence for every $P\in G\setminus \mathfrak{p}$, $\plm(P)\not\in \CC$. By choice of $h$, for all $\alpha \in Z(\mathfrak{p})\setminus Z(h)$ and $P\in G\setminus \mathfrak{p}$, $\plm(P)=\lm(q_\alpha(P))$, so that $q_\alpha(G\setminus\mathfrak{p})$ does not contain any units. Since this set is a Gr\"obner basis for $q_\alpha(\mathfrak{Q})$ by Theorem \ref{mainRef} we conclude that $D_n/q_\alpha(\mathfrak{Q})\not=0$. 

Now notice that
$$\frac{D_n}{q_{\alpha}(\mathfrak{Q})}\cong \frac{R}{\mathfrak{Q}}\otimes_A \frac{A}{\mathfrak{m}_\alpha}\cong \left(\frac{R}{\mathfrak{J}}\otimes_A \frac{A}{\mathfrak{p}}\right)\otimes_A \frac{A}{\mathfrak{m}_\alpha}\cong \frac{R}{\mathfrak{J}}\otimes_A\frac{A}{\mathfrak{m}_\alpha},$$
which proves the claim.
\end{proof}

\begin{proof}[Proof of Theorem \ref{conj}]
We denote 
$$M=\frac{D_n[s_1,\dots,s_p]f_1^{s_1}\dots f_p^{s_p}}{D_n[s_1,\dots,s_p]f_1^{s_1+1}\dots f_p^{s_p+1}}.$$
 Then $B_F=\Ann_{A}(M)$, and $M$ is a cyclic $D_n[s_1,\dots,s_p]$-module. Let $C\subset Z(B_F)$ be an irreducible component. By Theorem \ref{mainthm} there is an $h\in A$ which does not vanish identically on $C$ such that for all $\alpha\in C\setminus Z(h)$, $M\otimes_A (A/ \mathfrak{m}_\alpha)\not=0$. 

By Proposition \ref{crit} we conclude that 
$$\text{Exp}(C\setminus Z(h))\subset \mathcal{S}(F).$$
The map $\text{Exp}:\CC^p\to (\CC^*)^p$ is continuous for the analytic topology. In the analytic topology we have $\overline{C\setminus Z(h)}=C$, and hence
$$\text{Exp}(C)\subset \overline{\text{Exp}(C\setminus Z(h))}\subset\overline{\mathcal{S}(F)}=\mathcal{S}(F),$$
which is what we wanted to prove.
\end{proof}

\section{Relatively holonomic modules}
We keep the notation of the previous section and denote $A=\CC[s_1,\dots,s_p], R=D_n[s_1,\dots,s_p]$.
In this section we investigate non-vanishing of the specialization under the assumption that the $R$-module $M$ is relatively holonomic. To explain what this means we recall the following definitions. On $R$ we have the \textit{relative filtration}
$$F_pR := F_pD_n\otimes_{\CC}\CC[s_1,\dots,s_p],$$
where $F_pD_n$ is the usual order filtration. The associated graded ring can be identified with the ring of regular functions on $(T^*\CC^n)\times \CC^p$. Any coherent $R$-module can be equipped with a good filtration $F_\bullet M$ and this yields a $\gr(R)$-module $\gr(M)$. 
\begin{definition}\label{relhol}
An $R$-module $M$ is called relatively holonomic if 
$$\Chr(M):=\supp_{(T^*\CC^n)\times \CC^p}(\gr(M))=\bigcup_{i\in I}\Lambda_i\times S_i,$$
where the $\Lambda_i$ are Lagrangian subvarieties of $T^*\CC^n$ and the $S_i$ are subvarieties of $\CC^p$.
\end{definition}
We will denote by $\pi_2:(T^*\CC^n)\times\CC^p\to \CC^p$ the second projection.
It is a standard fact that $\Chr(M)$ does not depend on the chosen good filtration, and that it equals $Z(\sqrt{\Ann_{\gr(R)}(\gr(M))})_{red}$. Moreover, the irreducible components of $\Chr(M)$ come equipped with well-defined multiplicities.
\begin{proposition}[\cite{BVWZ}, Lemma 3.4.1]\label{proj}
If $M$ is relatively holonomic, then $\pi_2(\Chr(M))=Z(\Ann_A(M))$.
\end{proposition}
To analyse the specialization of relatively holonomic $R$-modules we will need the notion of purity and grade.
\begin{definition}
Let $M$ be an $R$-module. The \textit{grade} $j(M)$ of $M$ is defined to be the least integer for which $\Ext^i_R(M,R)\not=0$. $M$ is called \textit{pure} if for every non-zero submodule $N\subset M$, $j(N)=j(M)$.
\end{definition}
\begin{proposition}[\cite{bjork}, A:IV.2.3]\label{min}
For a short exact sequence $0\to M_1\to M_2\to M_3\to 0$ we have
$$j(M_2)=\min\{j(M_1),j(M_3)\}.$$
\end{proposition}

\begin{proposition}\label{pure}
If $M$ is pure of grade $k$ then there exists a good filtration $F_\bullet$ on $M$ such that $\gr(M)$ has no embedded associated primes, and $\Chr(M)$ is purely $2n+p-k$ dimensional.
\end{proposition}
\begin{proof}
By \cite[A:IV:4.11]{bjork} there exists a filtration $F_\bullet$ on $M$ such that $\gr(M)$ is pure and $j(\gr(M))=j(M)$. By \cite[A:IV:3.7]{bjork}, $\gr(M)$ does not have embedded primes and $\dim(A_{\mathfrak{p}})=j(M)$ for every minimal prime of $\gr(M)$. This means that $\dim(Z(\mathfrak{p}))=\dim(A)-j(M)=2n+p-j(M)$.
\end{proof}

\begin{corollary}
Let $M$ be a pure $R$-module. Let $f\in A$ be such that $Z(f)$ does not contain a component of $Z(\Ann_A(M))$. Then $f\in A$ is not a zero divisor on $M$.
\end{corollary}
\begin{proof}
Chose a filtration on $M$ such that $\gr(M)$ has no embedded primes, using Proposition \ref{pure}. This means that $f\in A$ is a zero divisor on $gr(M)$ if and only if $Z(f)$ contains a component of $\Chr(M)$. But by Proposition \ref{proj} this is equivalent to $Z(f)$ containing a component of $Z(\Ann_A(M))$. Clearly $f$ not being a zero divisor on $\gr(M)$ implies that it is not a zero divisor on $M$.
\end{proof}

\begin{theorem}[\cite{BVWZ}]\label{non-vanishing}
Suppose that $M$ is pure, and let $f\in A$ be such that $f$ is not contained in any minimal prime ideal containing $Ann_A(M)$. Then 
$$\Chr\left(M\otimes_A \frac{A}{(f)}\right)=\Chr(M)\cap Z(f).$$
In particular, if $Z(f)\cap Z(\Ann_A(M))\not=\emptyset$, then $M\otimes_A \frac{A}{(f)}\not=0$.
\end{theorem}
\begin{remark}
This theorem implies that $M\otimes_A(A/(f))$ will be a relatively holonomic $R$-module, and the characteristic variety will be purely $2n+p-k-1$ dimensional. However, in general it need not be pure, since the associated graded can have embedded primes.
\end{remark}

We define
$$G=\bigcup_{\substack{N\subset M\\ j(N)>j(M)}}N.$$
We refer also to \cite[A:IV:2]{bjork} for a more general and detailed analysis of this construction and the following lemma.
\begin{lemma}\label{gradeG}
$G$ is a submodule of $R$ with $j(G)>j(M)$.
\end{lemma}
\begin{proof}
For the first claim we need only prove that if $g_1,g_2\in G$, then also $g_1+g_2\in G$. It then suffices to show that if $N_1,N_2\subset M$ are such that $j(N_1),j(N_2)>j(M)$, then also $j(N_1+N_2)>j(M)$. 
There is an exact sequence
$$0\to N_1\to N_1+N_2\to \frac{N_2}{N_1\cap N_2}\to 0,$$
so that by Proposition \ref{min}
$$j(N_1+N_2)=\min\{j(N_1),j(N_2/(N_1\cap N_2))\}.$$
But $j(N_2)=\min\{ j(N_1\cap N_2), j(N_2/(N_1\cap N_2))\}$, which shows that $j(N_2/(N_1\cap N_2))\geq j(N_2)$. This means that $j(N_1+N_2)\geq \min\{j(N_1),j(N_2)\}>j(M)$ which is what we wanted to prove.

Since $M$ is finitely generated, there exist finitely many $N_1,\dots,N_q\subset M$ with $j(N_i)>j(M)$ such that $G=\sum_{i=1}^qN_i$. Using induction and the previous paragraph it then follows that $j(G)>j(M)$.
\end{proof}
It follows that $G$ can also be characterised as the largest submodule of $M$ with grade larger than $j(M)$.
\begin{definition}
The \textit{purification} of $M$ is defined to be $M^p:=M/G$.
\end{definition}
\begin{proposition}\label{relcharpure}
$M^p$ is a pure $R$-module of grade $j(M)$. If $\Chr(M)=\bigcup_{i\in I}\Lambda_i\times S_i$, then
$$\Chr(M^p)=\bigcup_{\substack{i\in I\\ \dim(S_i)=2n+p-j(M)}}\Lambda_i\times S_i.$$
In particular, $Z(\Ann_A(M^p))$ is equal to the top-dimensional part of $Z(\Ann_A(M))$.
\end{proposition}
\begin{proof}
$j(M)=j(M^p)$ follows from Lemma \ref{gradeG} and the equality
$$j(M)=\min\{j(G),j(M^p)\}.$$ 
The second claim now follows when we prove purity of $M^p$ by comparing multiplicities of relative characteristic cycles and the last claim then follows from Proposition \ref{proj}. Hence we need to prove that for every non-zero $H\subset M^p$, $j(H)=j(M^p)$. Denote by $\tilde{H}$ the inverse image of $H$ under the natural map $M\to M^p$. Then we have an exact sequence
$$0\to G\to \tilde{H}\to H\to 0.$$
Since $H\not=0$, $G\not=\tilde{H}$, and hence by definition of $G$, $j(\tilde{H})\leq j(M)<j(G)$. Since $j(\tilde{H})=\min\{j(G),j(H)\}$, we conclude that $j(\tilde{H})=j(H)$. It follows that $j(H)\leq j(M)=j(M^p)$. Since $j(M^p)=\min\{j(H),j(M^p/H)\}$, we conclude that $j(H)=j(M^p)$.
\end{proof}

\begin{proof}[proof of theorem \ref{relholmain}]
Let $C\subset Z(\Ann_A(M))$ be an irreducible component containing $\alpha$. Denote by $I_0$ the ideal of functions vanishing on $C$. We define $M_0:=M\otimes_A \frac{A}{I_0}$. 
By Lemma \ref{sec3:cor} we know that $\Ann_A(M_0)=I_0$, so that in particular $M_0\not=0$. $M_0$ is still relatively holonomic. If $I_0=\mathfrak{m}_\alpha$ we are done. 

If not, let $f\in \mathfrak{m}_\alpha\setminus I_0$. We know from Proposition \ref{relcharpure} and Proposition \ref{proj} that $Z(\Ann_A((M_0)^p))=Z(I_0)$. By choice of $f$ and the purity of $(M_0)^p$, Theorem \ref{non-vanishing} applies, so that
 $$(M_0)^p\otimes_A\frac{A}{(f)}\not=0,$$
 and $\Chr((M_0)^p\otimes_AA/(f))=\Chr((M_0)^p)\cap Z(f)$.
Right exactness of the tensor product applied to the surjection $M_0\to (M_0)^p$ means that also
$$M_1:=M_0\otimes_A \frac{A}{(f)}=M\otimes_A\frac{A}{I_0+(f)}\not=0.$$ 
If $I_1:=I_0+(f)=\mathfrak{m}_\alpha$ we are done. 

If not, the surjection $M_1\to (M_0)^p\otimes_A A/(f)$ shows that 
\begin{align*}
Z(Ann_A(M_1))&\supset Z(\Ann_A((M_0)^p\otimes_A A/(f))\\
&=\pi_2(\Chr( (M_0)^p\otimes_A A/(f)))\\
&= \pi_2(\Chr((M_0)^p)\cap Z(f))\ni\alpha.
\end{align*}
We can now iterate the whole previous procedure by applying it to $M_1$ rather than $M$. In this way we keep modding out $M$ by larger and larger ideals, all included in $\mathfrak{m}_\alpha$, and hence this process is finite and eventually proves that $M\otimes_A (A/\mathfrak{m}_\alpha)\not=0$.
\end{proof}

\section{Diagonal specialization of Bernstein-Sato ideals}
We continue to denote $A=\CC[s_1,\dots,s_p]$, $R=D_n[s_1,\dots,s_p]$. Let $f_1,\dots,f_p\in \CC[x_1,\dots,x_n]$, and denote $f=\prod_{i=1}^p f_i$. We denote $\mathcal{O}_X=\CC[x_1,\dots,x_n]$. Then we consider the $R$-module $N_F=\mathcal{O}_X[s_1,\dots,s_p,f^{-1}]f_1^{s_1}\dots f_p^{s_p}$ and the $D_n[s]$-module $N_f=\mathcal{O}_X[s,f^{-1}]f^s$. We denote $F^s=f_1^{s_1}\dots f_p^{s_p}\in N_F$. On $N_F$ there is a $D_n$-linear map $\nabla_F:N_F\to N_F$ which maps
$$g(x_1,\dots,x_n,s_1,\dots,s_p)F^s\mapsto g(x_1,\dots,x_n,s_1+1,\dots,s_p+1)fF^s.$$
Similarly, there is the $D_n$-linear map $\nabla_f:N_f\to N_f$ mapping
$$g(x_1,\dots,x_n,s)f^{s}\mapsto g(x_1,\dots,x_n,s+1)ff^{s}.$$
We denote $M_F = RF^s\subset N_F$, $M_f=D_n[s]f^s\subset N_f$. Clearly $\nabla_F(M_F)\subset M_F$ and $\nabla(M_f)\subset M_f$. 

We denote by $\eta:R\to D_n[s]$ the map
$$P(s_1,\dots,s_p)\mapsto P(s),$$
and by $\eta_F:N_F\to N_f$ the map
$$g(x_1,\dots,x_n,s_1,\dots,s_p)F^s\mapsto g(x_1,\dots,x_n,s,\dots,s)f^s.$$
\begin{proposition}\label{prop5}
\begin{enumerate}
\item \label{prop:1}$\eta$ is a surjective map of rings with kernel equal to $I_\Delta R$, where $I_\Delta$ is the ideal defining the diagonal in $\CC^p$. 
\item \label{prop:2} $\eta_F$ is $R$-linear when $N_f$ is given the structure of an $R$-module via $\eta$.
\item\label{prop:3} $\eta_F$ is surjective, and the kernel is equal to $I_\Delta\mathcal{O}_X[s_1,\dots,s_p,f^{-1}]F^S$, where $I_\Delta$ is the ideal defining the diagonal in $\CC^p$. 
\item\label{prop:4} For every $k>0$, $\eta_F|_{\nabla^k(M_F)}:\nabla^k(M_F)\to N_F$ has image equal to $\nabla_f^k(M_f)$, and kernel equal to $\ker\eta_F\cap \nabla^k(M_F)$.
\item \label{prop:5}$\eta_F$ induces a surjective $R$-linear map $\widetilde{\eta_F}:M_F/\nabla_F(M_F)\to M_f/\nabla_f(M_f)$. The class of $P(s_1,\dots, s_p)F^s$ in $M_F/\nabla_F(M_F)$ maps to zero under  $\widetilde{\eta_F}$ if and only if 
$$P(s_1,\dots,s_p)F^s\in \nabla_F(M_F)+\ker\eta_F\cap M_F$$
\end{enumerate}
\end{proposition}
\begin{proof}
\eqref{prop:1} is clear. For \eqref{prop:2} it is enough to prove that $\eta_F(\partial_i F^S)=\partial_i\eta_F(F^S)$. This is an easy computation. The surjectivity in \eqref{prop:3} is clear. Suppose that 
$$g(x_1,\dots,x_n,s_1,\dots,s_p)F^S\in \ker\eta_F,$$
where $g(x_1,\dots,x_n,s_1,\dots,s_p)\in \mathcal{O}_X[s_1,\dots,s_p,f^{-1}]$. This means that
$$g(x_1,\dots,x_n,s,\dots,s)f^s=0,\text{ i.e. }g(x_1,\dots,x_n,s,\dots,s)=0,$$
where the last equality is an equality in $\mathcal{O}_X[s,f^{-1}]$. This means that $(X\setminus Z(f))\times\Delta\subset Z(g)$. Multiply $g$ by the smallest power of $f$ to clear denominators to obtain $h\in \mathcal{O}_X[s_1,\dots,s_p]$, now with $X\times \Delta\subset Z(h)$. The ideal defining $X\times\Delta$ in $X\times\CC^p$ is $ I_\Delta\mathcal{O}_X[s_1,\dots,s_p]$. It follows that $h\in I_\Delta\mathcal{O}_X[s_1,\dots,s_p]$, and hence $g\in I_\Delta\mathcal{O}_X[s_1,\dots,s_p,f^{-1}]$. In \eqref{prop:4} only the claim that the image is equal to $\nabla_f^k(M_f)$ requires a proof. Let $\nabla^k(P(s_1,\dots,s_p)F^s)\in \nabla_F^k(M_F)$. Now:
$$\nabla_F^k(P(s_1,\dots,s_p)F^s)=P(s_1+k,\dots,s_p+k)f^kF^{s},$$
so that by $R$-linearity of $\eta_F$,
$$\eta_F(\nabla_F^k(P(s_1,\dots,s_p)F^s))=P(s+k,\dots,s+k)f^kf^{s}=\nabla_f^k(P(s,\dots,s)f^s)\in \nabla_f^k(M_f).$$
On the other hand,
$$\nabla_f^k(P(s)f^s)=P(s+k)f^kf^s=\eta_F(P(s_1+k)f^kF^s)=\eta_F(\nabla_F^k(P(s_1)F^s)).$$
In \eqref{prop:5} the existence of $\widetilde{\eta_F}$ follows from \eqref{prop:4}. If $P(s_1,\dots,s_p)F^s\in \nabla_F(M_F)+\ker\eta_F\cap M_F$ then clearly it maps to zero under $\widetilde{\eta_F}$. On the other hand, suppose that $P(s_1,\dots,s_p)F^s$ maps to zero under $\widetilde{\eta_F}$. This means that
$$\eta_F(P(s_1,\dots,s_1)F^s)\in \nabla_f(M_f).$$
But $\nabla_F(M_F)$ surjects onto $\nabla_f(M_f)$ under $\eta_F$ by \eqref{prop:4}, so that 
$$\eta_F(P(s_1,\dots,s_1)F^s)=\eta_F(\nabla_F(Q(s_1,\dots,s_p)F^s)),$$
for some $Q(s_1,\dots,s_p)\in R$. This means that
$$P(s_1,\dots,s_p)F^s-\nabla_F(Q(s_1,\dots,s_p)F^S)=:T\in \ker \eta_F,$$
Now note that $\nabla_F(Q(s_1,\dots,s_p)F^S)\in M_F$ since $\nabla_F(M_F)\subset M_F$, so that also $T\in \ker\eta_F\cap M_F$. This concludes the proof since
 $$P(s_1,\dots,s_p)F^s=\nabla_F(Q(s_1,\dots,s_p)F^S)+T\in \nabla_F(M_F)+\ker\eta_F\cap M_F.$$
\end{proof}

\begin{proof}[proof of theorem \ref{spec}]
We continue to denote the ideal defining the diagonal in $\CC^p$ by $I_\Delta\subset\CC[s_1,\dots,s_p]$.
Consider the exact sequence
$$0\to \frac{\ker\eta_F\cap M_F+\nabla_F(M_F)}{\nabla_F(M_F)}\to \frac{M_F}{\nabla_F(M_F)}\to \frac{M_f}{\nabla_f(M_f)}\to 0,$$
where we use Proposition 5, \eqref{prop:5}. It follows from Proposition 5, \eqref{prop:3} that
$$I_\Delta\frac{M_F}{\nabla_F(M_F)}=\frac{I_\Delta M_F+\nabla_F(M_F)}{\nabla_F(M_F)}\subset \frac{\ker\eta_F\cap M_F+\nabla_F(M_F)}{\nabla_F(M_F)}.$$
We conclude that we have an exact sequence
$$0\to \frac{\ker\eta_F\cap M_F+\nabla_F(M_F)}{I_\Delta(M_F)+\nabla_F(M_F)}\to \frac{M_F}{I_\Delta(M_F)+\nabla_F(M_F)}\to \frac{M_f}{\nabla_f(M_f)}\to 0.$$
Let $b\in B_F$, and let $P\in R$ be such that
\begin{equation}bF^s=PfF^s\label{bsr}.\end{equation}
Let $T\in \ker\eta_F\cap M_F+\nabla_F(M_F)$. Write
$$T=g(x_1,\dots,x_n,s_1,\dots,s_p)f^{-k}F^s+\nabla_F(W),$$
for some $k\in \mathbb{Z}$, with the first summand in $\ker\eta_F\cap M_F$ and the second summand in $\nabla_F(M_F)$. From Proposition \ref{prop5}, \eqref{prop:3}, we know that $g\in I_\Delta \mathcal{O}_X[s_1,\dots,s_p]$. Then we observe:
\begin{align*}
\nabla_F^{-k}(b(s))T&=g(x_1,\dots,x_n,s_1,\dots,s_p)\nabla_F^{-k}(b(s))f^{-k}F^s+\nabla_F^{-k}(b(s))\nabla_F(W)\\
&=g(x_1,\dots,x_n,s_1,\dots,s_p)\nabla_F^{-k}(P)f^{-k+1}F^s+\nabla_F^{-k}(b(s))\nabla_F(W),
\end{align*}
where we use \eqref{bsr} after substituting $s=s-k$ and we abuse notation by also denoting 
$$\nabla_F:R\to R, \quad P(s_1,\dots,s_p)\mapsto P(s_1+1,\dots,s_p+1).$$
Iterating this it follows that
\begin{align*}
\prod_{i=1}^k\nabla_F^{-i}(b)T&=g(x_1,\dots,x_n,s_1,\dots,s_p)\nabla_F^{-1}(P)\dots \nabla_F^{-k}(P)F^s+\prod_{i=1}^k\nabla_F^{-i}(b)\nabla_F(W)\\
&=QF^s+\nabla_F(\prod_{i=0}^{k-1}\nabla_F^{-i}(b)W),
\end{align*}
where $Q=g(x_1,\dots,x_n,s_1,\dots,s_p)\nabla_F^{-1}(P)\dots \nabla_F^{-k}(P)\in I_\Delta R$. It follows that
$$\prod_{i=1}^k\nabla_F^{-i}(b)T\in I_\Delta(M_F)+\nabla_F(M_F).$$
Since $ \ker\eta_F\cap M_F+\nabla_F(M_F)$ is finitely generated, there exists a $k\gg 0$ such that
$$\prod_{i=1}^k\nabla_F^{-i}(b)\frac{\ker\eta_F\cap M_F+\nabla_F(M_F)}{I_\Delta(M_F)+\nabla_F(M_F)}=0.$$
Since this is true for every $b\in B_F$, we conclude that
 $$\supp_{\CC^p}\left(\frac{\ker\eta_F\cap M_F+\nabla_F(M_F)}{I_\Delta(M_F)+\nabla_F(M_F)}\right)\subset \bigcup_{i>0}\nabla^i(Z(B_F)).$$
Taking $\alpha\in (Z(B_F)\cap \Delta) \setminus  \bigcup_{i>0}\nabla^i(Z(B_F))$ we thus find
 $$0\to \frac{M_F}{\nabla_F(M_F)}\otimes_A \frac{A}{\mathfrak{m}_\alpha}\to  \frac{M_f}{\nabla_f(M_f)}\otimes_A \frac{A}{\mathfrak{m}_\alpha}\to 0,$$
 is an isomorphism. The left hand side is non-zero by Theorem \ref{relholmain}, since we know from \cite{Mai16} that $M_F/
\nabla_F(M_F)$ is relatively holonomic, which proves the claim.
\end{proof}

\smallskip
\bibliographystyle{alpha}
\bibliography{lit}
\smallskip

\end{document}